\theoremstyle{plain}
\newtheorem*{thmnonnum}{Theorem}
\theoremstyle{main}
\theoremstyle{main}
\newtheorem{rema}{Remarque}
\newtheorem*{remasnonnum}{Remark}
\newtheorem*{remer}{Acknowledgements}
\newtheorem*{coropasnum}{Corollary}
\newtheorem*{propnonnum}{Proposition}
\newtheorem*{anne}{Complement}
\newcommand\nn{\mathbb{N}}
\newcommand\zz{\mathbb{Z}}
\newcommand\sss{\mathbb{S}}
\newcommand\ungra{\mathbf{1}}
\newcommand\fra[2]{\displaystyle\frac{#1}{#2}}
\newcommand\cali[1]{\mathcal{#1}}
\newcommand*\diff{\mathop{}\!\mathrm{d}}
\DeclareMathOperator{\Var}{Var}
\DeclareMathOperator{\MSE}{MSE}
\title{On the optimality of the Monte-Carlo estimator}
\author{Antoine Pinochet Lobos}
\begin{document}

\maketitle

\renewcommand{\abstractname}{Abstract}
\begin{abstract}
We prove that on an atomless probability space, the worst-case mean squared error of the Monte-Carlo estimator is minimal if the random points are chosen independently.
\end{abstract}

\section{Introdution and statement of the results}

Let $(X,\mu)$ be a probability space. We are interested in the following general question: if $f$ is a measurable, real or complex-valued function on $X$, how can we efficiently compute the integral $\int_X f\diff \mu$ ? The famous \textit{Monte-Carlo method} is a solution to this problem: just choose an integer $n$ big enough, and draw $Z_1,\cdots,Z_n$ independent $X$-valued random variables (that is, \textit{random points}) of law $\mu$, and form the mean $\frac{1}{n} \sum^n_{i=1} f(Z_i)$, called the \textit{Monte-Carlo estimator}.

We measure the quality of this method by computing what we call the \textit{mean squared error}: we have the well-known equality, valid for all $n\in \nn^*$ and $f \in L^2(X,\mu)$, \[\Var\left(\frac{1}{n}\sum^n_{i=1} f(Z_i) - \int_X f\diff\mu\right) = \frac{1}{n}\left\Vert f - \int_X f \diff \mu \right\Vert^2_{L^2(X,\mu)}\]and we obtain the following equality, concerning the \textit{worst-case mean squared error}:

\[\sup_{\substack{f \in L^2(X,\mu) \\ \Vert f \Vert_2 = 1}} \Var\left(\frac{1}{n}\sum^n_{i=1} f(Z_i) - \int_X f \diff \mu\right) = \frac{1}{n}.\]

In this paper, we study the question of measuring the worst-case mean squared error, in the general situation where the points $Z_i$ are not supposed independent, and we prove the following theorem and its corollary.

\begin{thmnonnum} Let $(X,\mu)$ be a probability space, let $N,n \in \nn^*$, and $Z := (Z_1,\cdots,Z_n)$ an $n$-tuple of random points on $X$ such that for all $i$, the law of $Z_i$ is $\mu$. We do not assume that the $Z_i$'s are independent. Furthermore, we assume that $X$ can be partitioned in $N$ measurable subsets of equal measure.

We then have \[\sup_{\substack{f \in L^2(X,\mu) \\ \Vert f \Vert_2 = 1}} \Var\left(\frac{1}{n}\sum^n_{i=1} f(Z_i) - \int_X f \diff \mu\right) \geq \frac{1}{n}\left(1 - \fra{n-1}{N-1}\right).\]
\end{thmnonnum}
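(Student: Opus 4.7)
The plan is to restrict the supremum in the statement to a cleverly chosen finite-dimensional subspace of $L^2(X,\mu)$ and exploit the elementary fact that, for a positive symmetric quadratic form on a finite-dimensional space, the largest eigenvalue is at least the average of the eigenvalues. To this end I fix a partition $X = A_1 \sqcup \cdots \sqcup A_N$ into measurable sets of measure $1/N$ each (given by hypothesis) and let $V \subset L^2(X,\mu)$ be the $(N-1)$-dimensional subspace
\[ V := \Bigl\{\, \sum_{k=1}^N c_k \mathbf{1}_{A_k} \;:\; c_1,\dots,c_N \in \rr,\ \sum_{k=1}^N c_k = 0 \,\Bigr\}. \]
Every $f \in V$ has zero $\mu$-mean, so on $V$ the functional $Q(f) := \Var\bigl(\tfrac{1}{n}\sum_{i=1}^n f(Z_i)\bigr) = E\bigl[\bigl(\tfrac{1}{n}\sum_{i=1}^n f(Z_i)\bigr)^2\bigr]$ is a positive symmetric quadratic form, and the supremum in the theorem is at least the largest eigenvalue $\lambda_{\max}(Q)$ of its matrix in any orthonormal basis of $V$.

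I would then invoke the inequality $\lambda_{\max}(Q) \geq \mathrm{tr}(Q)/(N-1)$ and compute the trace via the reproducing kernel of $V$. Picking any orthonormal basis $(e_\ell)_{\ell=1}^{N-1}$ of $V$ and writing $K(x,y) := \sum_\ell e_\ell(x)e_\ell(y)$, interchanging sum and expectation gives
\[ \mathrm{tr}(Q) \;=\; \sum_{\ell=1}^{N-1} Q(e_\ell) \;=\; \frac{1}{n^2} \sum_{i,j=1}^n E\bigl[K(Z_i,Z_j)\bigr]. \]
The kernel $K$ is easily identified by orthogonal decomposition: since $V$ is the orthogonal complement of the line of constants inside $\mathrm{span}(\mathbf{1}_{A_1},\dots,\mathbf{1}_{A_N})$, and $(\sqrt{N}\,\mathbf{1}_{A_k})_{k=1}^N$ is an orthonormal basis of this span (because $\mu(A_k) = 1/N$), one obtains
\[ K(x,y) \;=\; N\sum_{k=1}^N \mathbf{1}_{A_k}(x)\mathbf{1}_{A_k}(y) \;-\; 1. \]

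To conclude, I plug in: for $i=j$, one has $K(Z_i,Z_i) = N-1$ almost surely; for $i\neq j$, $E[K(Z_i,Z_j)] = Np_{ij}-1$, where $p_{ij} := \mathbb{P}(Z_i \text{ and } Z_j \text{ lie in the same block of the partition}) \geq 0$. Hence
\[ \mathrm{tr}(Q) \;\geq\; \frac{1}{n^2}\bigl(n(N-1) - n(n-1)\bigr) \;=\; \frac{N-n}{n}, \]
and dividing by $N-1$ yields exactly the announced bound $\tfrac{1}{n}\bigl(1 - \tfrac{n-1}{N-1}\bigr)$. The only substantive step is the trace-via-kernel computation, which is short and linear-algebraic; there is no real obstacle, and the robustness of the argument comes from the fact that, whatever the joint law of $(Z_1,\dots,Z_n)$, the same-block probabilities $p_{ij}$ can only \emph{increase} the trace, so the trivial bound $p_{ij}\geq 0$ closes the proof and no hypothesis on the dependence structure of the $Z_i$'s is needed.
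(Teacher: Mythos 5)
Your proof is correct. The kernel identity $K(x,y) = N\sum_k \mathbf{1}_{A_k}(x)\mathbf{1}_{A_k}(y) - 1$, the trace formula, and the final accounting (the diagonal contributes $n(N-1)$, each off-diagonal term equals $Np_{ij}-1 \geq -1$, so $\mathrm{tr}(Q) \geq (N-n)/n$ and $\lambda_{\max}(Q) \geq \frac{1}{n}\bigl(1-\frac{n-1}{N-1}\bigr)$) all check out; since unit vectors of $V$ have zero $\mu$-mean, $\lambda_{\max}(Q)$ is indeed a lower bound for the supremum in the statement, and the interchanges of sum and expectation are harmless because all functions involved are simple. The route, however, is genuinely different from the paper's: there one takes the explicit test functions $f_{p,q} = \sqrt{N/2}\,(\mathbf{1}_{X_p} - \mathbf{1}_{X_q})$, expands the cross-terms over the partition, discards the nonnegative same-block probabilities, and applies a pigeonhole argument to the quantities $\theta_{p,q}$ (sums of different-block probabilities), whose total over the $N(N-1)$ ordered pairs is at most $2n(n-1)$, so some pair witnesses the bound. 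You replace this pigeonhole over pairs by the spectral inequality $\lambda_{\max} \geq \mathrm{tr}/(N-1)$ on the $(N-1)$-dimensional space of block-constant mean-zero functions, with the trace computed via the reproducing kernel; the probabilistic input is the same in both proofs (the chance that $Z_i$ and $Z_j$ fall in different blocks is at most $1$, i.e.\ $p_{ij}\geq 0$). The paper's version buys explicit near-optimal test functions and entirely elementary bookkeeping; yours buys a cleaner conceptual picture and a sharper handle on the equality case, since your trace identity is exact: equality in the theorem forces $p_{ij}=0$ for all $i\neq j$ together with $Q$ being a multiple of the identity on $V$, which is precisely what happens in the paper's example of sampling $n\leq N$ points without replacement from $\{1,\cdots,N\}$.
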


\begin{coropasnum} Let $(X,\mu)$ be an atomless probability space, $n \in \nn^*$, and $Z := (Z_1,\cdots,Z_n)$ an $n$-tuple of random points on $X$ such that for all $i$, the law of $Z_i$ is $\mu$. We do not assume that the $Z_i$'s are independent.

We then have\[\sup_{\substack{f \in L^2(X,\mu) \\ \Vert f \Vert_2 = 1}} \Var\left(\frac{1}{n}\sum^n_{i=1} f(Z_i) - \int_X f \diff \mu\right) \geq \frac{1}{n}.\]
\end{coropasnum}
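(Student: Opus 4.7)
The plan is to deduce the Corollary from the Theorem by exploiting atomlessness: on an atomless probability space, we can partition $X$ into $N$ measurable pieces of equal measure for \emph{every} $N \in \nn^*$, not just some. Applying the Theorem for each such $N$ and letting $N \to \infty$ gives the bound $\frac{1}{n}$.

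First I would verify the partitioning claim. This relies on Sierpi\'nski's theorem: if $(X,\mu)$ is atomless and $A \subseteq X$ is measurable, then for every $t \in [0, \mu(A)]$ there exists a measurable $B \subseteq A$ with $\mu(B) = t$. Given $N \in \nn^*$, I would construct the desired partition inductively: pick $A_1 \subseteq X$ with $\mu(A_1) = 1/N$; then, assuming $A_1,\ldots,A_k$ pairwise disjoint with $\mu(A_i) = 1/N$ have been chosen, pick $A_{k+1} \subseteq X \setminus (A_1 \cup \cdots \cup A_k)$ of measure $1/N$, which is possible since the complement has measure $(N-k)/N \geq 1/N$. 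After $N$ steps we obtain a partition of $X$ into $N$ measurable sets of measure $1/N$ each (the last step produces a set that exhausts the remaining mass).

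Having established that the hypothesis of the Theorem is satisfied for every $N$, I would apply it: for each fixed $n$-tuple $Z = (Z_1,\ldots,Z_n)$ of random points with marginals $\mu$, and for every $N \in \nn^*$,
\[\sup_{\substack{f \in L^2(X,\mu) \\ \Vert f \Vert_2 = 1}} \Var\left(\frac{1}{n}\sum^n_{i=1} f(Z_i) - \int_X f \diff \mu\right) \geq \frac{1}{n}\left(1 - \frac{n-1}{N-1}\right).\]
The left-hand side does not depend on $N$. Letting $N \to \infty$ on the right-hand side, the term $\frac{n-1}{N-1}$ tends to $0$, so the right-hand side tends to $\frac{1}{n}$, and the inequality passes to the limit trivially.

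The only real content is the partitioning step, which I expect to be straightforward thanks to Sierpi\'nski's theorem; the rest is a pure limiting argument. There is no serious obstacle here, the Corollary being essentially a packaging of the Theorem together with the characterization of atomless measures.
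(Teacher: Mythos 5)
Your proposal is correct and follows essentially the same route as the paper: invoke Sierpi\'nski's theorem to partition the atomless space into $N$ measurable pieces of equal measure for arbitrarily large $N$, apply the Theorem, and let $N \to \infty$ (the paper phrases the limit as ``$\MSE(Z) \geq \frac{1}{n} - \epsilon$ for every $\epsilon > 0$,'' which is the same argument).
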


\begin{remasnonnum} As we shall see in the paper, in the case where $X := \{1,\cdots,N\}$ and $\mu$ is the uniform measure on $X$, the inequality of the theorem is an equality when the law of $Z$ is the uniform measure on the set of $n$-tuples of points in $X$ such that the coordinates are pairwise different. This random $n$-tuple is then, in the sense of the worst-case mean squared error, than an independent $n$-tuple.

As we saw before, the inequality in the corollary is an equality if the $Z_i$'s are independent. We don't know if this condition is necessary. It is, to our opinion, worth knowing that in \cite{LPS}, the authors build, for all prime $p$ such that $p \equiv 1 [4]$, a $(p+1)$-tuple $Z$ of uniform random points on the $2$-sphere which are not independent, and prove that its worst-case mean squared error is $\frac{4p}{(p+1)^2}$, which is approximately $4$ times the lower bound in the corollary. In the article \cite{PinPit}, it is shown that their construction is optimal, in a broad framework.
\end{remasnonnum}

We confess our astonishment of having found no trace of these statements, which answer a question that we find both natural and general, and in an elementary way.

\begin{remer} We woud like to thank Sébastien Darses, Thibault Espinasse, Alexandre Gaudillière, Pierre Mathieu, Clothilde Melot, Pierre Pudlo and more particularly Christophe Pittet for the conversations that we had about the questions studied in this paper, and for their encouragement.
\end{remer}

\section{Proofs}

To alleviate the presentation, we use the following notation: we consider the numbers \[\MSE_Z(f) := \Var\left(\frac{1}{n}\sum^n_{i=1} f(Z_i) - \int_X f \diff \mu\right)\] et 
\[\MSE(Z) := \sup_{\substack{f \in L^2(X,\mu) \\ \Vert f \Vert_2 = 1}} \MSE_Z(f).\]

First of all, if $f \in L^2(X,\mu)$, we notice that $\MSE_Z(f) = \MSE(Z)\left(f-\int_X f \diff \mu\right)$. Consequently, $\MSE(Z)$ is also the $\sup$ of the $\MSE_Z(f)$ for $f$ of norm $1$ zero integral. 

Let $f \in L^2(X,\mu)$, of norm $1$ and zero integral. We have that \[\begin{array}{rcl}
\MSE_Z(f) &= &\displaystyle\mathbb{E}\left[\left(\frac{1}{n}\sum^n_{i=1} f(Z_i)\right)^2\right]\\
&= &\displaystyle\mathbb{E}\left[\frac{1}{n^2}\sum^n_{i=1} f(Z_i)^2 + \frac{1}{n^2} \sum_{i \neq j} f(Z_i)f(Z_j)\right]\\
&= &\displaystyle\frac{1}{n^2} \sum^n_{i=1}\mathbb{E}[f(Z_i)^2] + \frac{1}{n^2} \sum_{i\neq j} \mathbb{E}[f(Z_i)f(Z_j)]\\
&= &\displaystyle \frac{1}{n} + \frac{1}{n^2}\sum_{i\neq j} \mathbb{E}[f(Z_i)f(Z_j)]\\
\end{array}\]

and we recover the fact recalled above: if the $Z_i$'s are pairwise independent, and if $f$ is of norm $1$ of zero integral, $\MSE_Z(f) = \frac{1}{n}$.

Let us prove the theorem.

\begin{proof}[Proof of the theorem] Let $X_1,...,X_N$ be measurable subsets that partition $X$, all of measure $\frac{1}{N}$, with $N \geq 2$. Let us denote, for $p \in \{1,...,N\}$, $\mu_p := \mu(X_p)$. For every $(p,q) \in \{1,...,N\}^2$, we set \[f_{p,q} := \sqrt{\frac{N}{2}} \ungra_{X_p} - \sqrt{\frac{N}{2}} \ungra_{X_q}.\]Moreover, we will denote, for $k \in \{1,...,N\}$, $f_{p,q}(X_k)$ the value that $f_{p,q}$ takes on $X_k$ - this abuse of notation is harmless because $f_{p,q}$ is constant on the $X_i$'s.

$f_{p,q}$ is visibly of zero integral, and if $p\neq q$, its norm is $1$.

We will prove that there are different $p,q \in \{1,...,N\}$ such that $\MSE_Z(f_{p,q}) \geq \frac{1}{n}\left(1 - \frac{n-1}{N-1}\right)$.

Let $p,q \in \{1,...,N\}$.
We have that \[\begin{array}{rcll}
\MSE_Z(f_{p,q}) &= &\displaystyle\frac{1}{n} &\displaystyle + \ \frac{1}{n^2}\sum_{i\neq j} \mathbb{E}\left[f_{p,q}(Z_i)f_{p,q}(Z_j)\right]\\
&= &\displaystyle \frac{1}{n} &\displaystyle +\ \frac{1}{n^2} \sum_{i \neq j}\left(\sum_k \mathbb{P}(Z_i \in X_k \ et \ Z_j \in X_k)f_{p,q}(X_k)^2\right. \\
&&&\displaystyle + \left.\sum_{l \neq m} \mathbb{P}(Z_i \in X_l \ et \ Z_j \in X_m)f_{p,q}(X_l)f_{p,q}(X_m)\right)\\
&= &\displaystyle \frac{1}{n} &\displaystyle +\ \frac{1}{n^2}\frac{N}{2} \sum_{i \neq j}\left(\mathbb{P}(Z_i \in X_p \ et \ Z_j \in X_p)\right. \\
&&&\displaystyle +\ \mathbb{P}(Z_i \in X_q \ et \ Z_j \in X_q)\\
&&&\displaystyle -\ \mathbb{P}(Z_i \in X_p \ et \ Z_j \in X_q)\\
&&&\displaystyle -\left.\ \mathbb{P}(Z_i \in X_q \ et \ Z_j \in X_p)\right).\\
\end{array}\]from which we deduce the inequality \[\MSE_Z(f_{p,q}) \geq \frac{1}{n} - \frac{1}{n^2}\frac{N}{2}\left(\sum_{i \neq j} \mathbb{P}(Z_i \in X_p \ et \ Z_j \in X_q) + \mathbb{P}(Z_i \in X_q \ et \ Z_j \in X_p)\right).\]Let us denote \[\theta_{p,q} := \sum_{i \neq j} \mathbb{P}(Z_i \in X_p \ et \ Z_j \in X_q) + \mathbb{P}(Z_i \in X_q \ et \ Z_j \in X_p).\]

Let us compute: \[\begin{array}{rcl}
\displaystyle\sum_{p \neq q} \theta_{p,q} &= &2\sum_{p\neq q} \sum_{i \neq j} \mathbb{P}(Z_i \in X_p\ et\ Z_j \in X_q)\\
&= &2\sum_{i \neq j}\sum_{p \neq q} \mathbb{P}(Z_i \in X_p \ et \ Z_j \in X_q)\\
&= &2\sum_{i \neq j} \mathbb{P}(Z_i \ \mbox{et}\ Z_j\ \mbox{ne sont pas dans le même morceau de la partition})\\
&\leq &2n(n-1).\\
\end{array}\]

Now, since this sum of $N(N-1)$ numbers is lower or equal than $2n(n-1)$, then one of the terms must be lower or equal than $2\frac{n(n-1)}{N(N-1)}$. For a couple $(p,q)$ such that $\theta_{p,q} \leq 2\frac{n(n-1)}{N(N-1)}$, we then have \[\begin{array}{rcl}
\MSE_Z(f_{p,q}) &\geq &\displaystyle\frac{1}{n} - \frac{1}{n^2}\frac{N}{2}2\frac{n(n-1)}{N(N-1)}\\
&= &\displaystyle \frac{1}{n} - \frac{n-1}{n(N-1)}\\
&= &\displaystyle \frac{1}{n}\left(1 - \frac{n-1}{N-1}\right).\\
\end{array}\]
\end{proof}

Here's an example where the inequality is an equality.

\begin{propnonnum} If $X := \{1,\cdots,N\}$, if $\mu$ is the uniform probability on $X$, if $n \leq N$, and if the law of $Z$ is the uniform measure on the set of $n$-tuples of points in $X$ which coordinates are pairwise different, then the inequality in the theorem is an equality, that is, \[\MSE_Z = \fra{1}{n}\left(1 - \fra{n-1}{N-1}\right).\]
\end{propnonnum}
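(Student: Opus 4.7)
The lower bound $\MSE(Z) \geq \frac{1}{n}(1 - \frac{n-1}{N-1})$ is already provided by the theorem, so the plan is to establish the reverse inequality, and in fact I will show the stronger statement that $\MSE_Z(f) = \frac{1}{n}(1 - \frac{n-1}{N-1})$ for \emph{every} $f \in L^2(X,\mu)$ of norm $1$ and zero integral. Once this is done, taking the supremum over such $f$ gives the equality, using as already noted that it suffices to consider $f$ of zero integral.

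Fix then $f \in L^2(X,\mu)$ with $\|f\|_2 = 1$ and $\int_X f \diff\mu = 0$. The starting point is the formula established at the beginning of the section, namely
\[\MSE_Z(f) = \frac{1}{n} + \frac{1}{n^2}\sum_{i\neq j} \mathbb{E}[f(Z_i)f(Z_j)].\]
The key observation is a symmetry property of the law of $Z$: because the uniform measure on $n$-tuples of pairwise distinct points of $X$ is invariant under permutations of coordinates and under permutations of $X = \{1,\dots,N\}$, for any $i \neq j$ the pair $(Z_i,Z_j)$ is uniformly distributed on ordered pairs of distinct elements of $X$. This reduces the computation of $\mathbb{E}[f(Z_i)f(Z_j)]$ to a single combinatorial sum that does not depend on $(i,j)$.

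Concretely, this gives
\[\mathbb{E}[f(Z_i)f(Z_j)] = \frac{1}{N(N-1)} \sum_{k\neq l} f(k)f(l) = \frac{1}{N(N-1)}\left[\left(\sum_k f(k)\right)^2 - \sum_k f(k)^2\right].\]
The zero-integral hypothesis reads $\sum_k f(k) = 0$, and the normalization $\|f\|_2^2 = 1$ reads $\sum_k f(k)^2 = N$, so $\mathbb{E}[f(Z_i)f(Z_j)] = -\frac{1}{N-1}$.

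Plugging this constant value into the formula for $\MSE_Z(f)$ gives
\[\MSE_Z(f) = \frac{1}{n} - \frac{n(n-1)}{n^2(N-1)} = \frac{1}{n}\left(1 - \frac{n-1}{N-1}\right),\]
which is independent of $f$. I don't expect any real obstacle in this plan; the only point requiring a moment of care is the invocation of symmetry to identify the joint law of $(Z_i,Z_j)$, but this is immediate from the definition of the uniform measure on tuples with pairwise distinct coordinates.
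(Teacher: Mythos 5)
Your proof is correct. It follows the same overall skeleton as the paper's: both plug the pair correlations $\mathbb{E}[f(Z_i)f(Z_j)]$ into the identity $\MSE_Z(f) = \frac{1}{n} + \frac{1}{n^2}\sum_{i\neq j}\mathbb{E}[f(Z_i)f(Z_j)]$ and use $\sum_k f(k) = 0$, $\sum_k f(k)^2 = N$. Where you diverge is in how the cross terms are evaluated: the paper computes $\sum_{l\neq m}\mathbb{E}[f(Z_l)f(Z_m)]$ by an explicit enumeration over all injective $n$-tuples, grouping them by their image set $A$ and invoking the binomial identities $\binom{N-2}{n-2}\binom{N}{n}^{-1} = \frac{n(n-1)}{N(N-1)}$, whereas you identify the two-dimensional marginal directly: by invariance of the law of $Z$ under permutations of the coordinates and under the coordinatewise action of permutations of $X$ (which acts transitively on ordered pairs of distinct points, and $Z_i \neq Z_j$ almost surely), the pair $(Z_i,Z_j)$ is uniform on ordered distinct pairs, so $\mathbb{E}[f(Z_i)f(Z_j)] = -\frac{1}{N-1}$ for every $i \neq j$. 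Your route buys a shorter computation with no binomial bookkeeping, makes transparent that $\MSE_Z(f)$ is the same constant for every unit-norm, zero-integral $f$, and keeps the sign of the correlation visible throughout (the paper's displayed chain momentarily drops the minus sign, though its conclusion is the stated one); the paper's enumeration, on the other hand, is entirely self-contained and does not require formulating the symmetry lemma. The one step you should state explicitly rather than wave at is the marginal identification itself, but the argument you sketch (exchangeability plus transitivity of $S_N$ on distinct ordered pairs, plus the fact that the marginal is supported on such pairs) is complete and correct.
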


\begin{proof}[Proof] Let $\pi$ be the measure on $X^n$ defined by \[\pi := \fra{(N-n)!}{N!}\sum_{\substack{i_1,\cdots,i_n \in X\\ \forall j\neq k, \\ i_j \neq i_k}}\delta_{(i_1,\cdots,i_n)},\]where $\delta$ is the notation for a Dirac measure. In words, $\pi$ is the uniform measure on the set of $n$-tuples of points in $X$ which coordinates are pairwise different. Let $Z = (Z_1,\cdots, Z_n)$ be an $n$-tuple of law $\pi$ (we then have, for all $i$, that $Z_i$ is uniform on $X$).

Let $f \in L^2(X,\mu)$ be of norm $1$, and such that $\int f \diff\mu = 0$. Let us compute: 

\[\begin{array}{rcl}
\sum_{l\neq m} \mathbb{E}[f(Z_l)f(Z_m)] &= &\displaystyle\sum_{l\neq m} \mathbb{E}\left[\sum_{\substack{A\subset X\\ \vert A \vert = n}} \sum_{\substack{i_1,\cdots,i_n \in A\\ \forall j\neq k\\ i_j \neq i_k}} \ungra_{\{Z_1 = i_1,\cdots,Z_n = i_n\}} f(Z_l)f(Z_m)\right]\\
&= &\displaystyle \sum_{l\neq m}\sum_{\substack{A\subset X\\ \vert A \vert = n}} \sum_{\substack{i_1,\cdots,i_n \in A\\ \forall j\neq k\\ i_j \neq i_k}}  \mathbb{P}\left[Z_1 = i_1,\cdots,Z_n = i_n\right] f(i_l)f(i_m)\\
&= &\displaystyle \dbinom{N}{n}^{-1}\sum_{\substack{A\subset X\\ \vert A \vert = n}}  \sum_{\substack{p,q \in A\\ p\neq q}}  f(p)f(q) \\
&= &\displaystyle \dbinom{N-2}{n-2}\dbinom{N}{n}^{-1} \sum_{\substack{p,q \in X\\ p\neq q}}  f(p)f(q) \\
&= &\displaystyle\frac{n(n-1)}{N(N-1)} \sum_{p \in X} f(p)\sum_{\substack{q \in X \\ q\neq p}} f(q)\\
&= &\displaystyle \frac{n(n-1)}{N-1}\Vert f \Vert^2_2\\
&= &\displaystyle \frac{n(n-1)}{N-1}.\\
\end{array}\]

We therefore have \[\MSE_Z(f) = \fra{1}{n}\left(1 - \frac{n-1}{N-1} \right).\]

\end{proof}

Let us prove the corollary.

\begin{proof}[Proof of the corollary] We will prove that for all $\epsilon > 0$, we have that $\MSE(Z) \geq \frac{1}{n} - \epsilon$, which is enough. According to a theorem of Sierpiński \cite{Sierpinski}, every atomless probability space is such that for every $a \in [0,1]$, there is a measurable subset of $X$ of measure $a$. From this, it is easy, for all arbitrarily big $N$, to partition $X$ in $N$ of measurable subsets of equal measure. If we choose $N$ such that $\frac{1}{n}\left(1 - \frac{n-1}{N-1}\right) \geq \frac{1}{n} - \epsilon$, which is obviously possible, then according to the theorem, it is possible to find $f$ of norm $1$, zero integral, such that $\MSE_Z(f) \geq \frac{1}{n} - \epsilon$.
\end{proof}

For the sake of completeness, we add a simple proof of Sierpiński's theorem.

\begin{anne}[Sierpiński's theorem on atomless probability spaces]

If $(X,\mathscr{B},\mu)$ is an atomless probability space, then for every measurable $A \subset X$, there exists $\phi : [0,\mu(A)] \rightarrow \mathscr{B}$ non-decreasing, such that $\forall t \in [0,\mu(A)],\quad \mu(\phi(t)) = t$.
\end{anne}

\begin{proof} The hypothesis of $X$ being atomless means that for every measurable $B \subset X$ such that $\mu(B) > 0$, there exists a measurable $C \subset B$ such that $0<\mu(C) < \mu(B)$.

Let $A$ be a measurable subset of $X$, such that $\mu(A) > 0$ (if $\mu(A) = 0$, it is enough to define $\phi(0) := A$). 
By applying Zorn's lemma, we obtain a $\phi : I \rightarrow \mathscr{B}$ where $I$ is a subset of $[0,\mu(A)]$, $\phi$ is non-decreasing, such that $\forall i \in I$, $\mu(\phi(i)) = i$, and such that $\phi$ has no strict extension that satisfies these properties. Let us show that $I$ equals $[0,\mu(A)]$.

On the one hand, $I$ is closed. Indeed, let $(x_n)_{n \in \nn}$ be a sequence of elements in $I$ that converges to some $x$. Let us show that $x \in I$. We can assume, up to extracting a subsequence, that $(x_n)_n$ is monotonous. If $x \not \in I$, let us define $\tilde{\phi} := I \cup \{x\} \rightarrow \mathscr{B}$ that extends $\phi$ by defining $\tilde{\phi}(x) := \bigcap_n \phi(x_n)$ if $(x_n)_n$ is non-increasing, and $\tilde{\phi}(x) := \bigcup_n \phi(x_n)$ if $(x_n)_n$ non-decreasing. According to $\mu$'s continuity properties, $\mu(\phi(x)) = \lim_n x_n = x$, and according to the monotony properties of $\mu$, $\tilde{\phi}$ is non-decreasing. $\tilde{\phi}$ is therefore a strict extension of $\phi$ that verifies the same properties. This is a contradiction. So $x \in I$, and therefore, $I$ is closed.

On the other hand, $I$ verifies $\forall a,b \in I,\ a < b \Rightarrow \left(\exists c \in I, \ a < c < b\right)$ (we say that $I$ is \textit{order-dense}). Indeed, if there are $a,b \in I$ such that $a < b$ and $]a,b[ \cap I = \emptyset$, then let us use the hypothesis that $X$ is atomless, which provides a measurable $C \subset \phi(b) \setminus \phi(a)$ such that $0 < \mu(C) < b - a$. Let us then define $\tilde{\phi} : I \cup \{a + \mu(C)\}$ that extends $\phi$ by defining $\tilde{\phi}(a + \mu(C)) := \phi(a) \cup C$. Then $\mu(\tilde{\phi})(a + \mu(C)) = \mu(\phi(a) \cup C) = a + \mu(C)$. According to $\mu$'s monotony properties, $\tilde{\phi}$ est non-decreasing. $\tilde{\phi}$ is then a strict extension of $\phi$ that verifies the same properties. This is a contradiction. Therefore, $I$ is order-dense.

So $I$ is closed and order-dense. Therefore, $I = [0,\mu(A)]$.
\end{proof}

\bibliographystyle{alpha}
\bibliography{bibliomontecarlo}

\begin{thebibliography}{LPS86}

\bibitem[LP18]{PinPit}
A.~Pinochet Lobos and C.~Pittet.
\newblock The exact convergence rate in the ergodic theorem of
  {Lubotzky}-{Phillips}-{Sarnak}.
\newblock {\em arXiv:1805.05261}, 2018.

\bibitem[LPS86]{LPS}
A.~Lubotzky, R.~Phillips, and P.~Sarnak.
\newblock {Hecke} operators and distributing points on the sphere. {I}.
\newblock {\em Comm. Pure Applied Math.}, 39:S149--S186, 1986.

\bibitem[Sie22]{Sierpinski}
W.~Sierpiński.
\newblock Sur les fonctions d'ensemble additives et continues.
\newblock {\em Fundamenta Mathematicae}, 3:240--246, 1922.

\end{thebibliography}

\end{document}